\newcommand{\R}{\mathbf{R}}
\newcommand{\e}{\varepsilon}
\newcommand{\ra}{\rightarrow}
\newcommand{\ol}{\overline}
\renewcommand{\d}{\partial}
\renewcommand{\phi}{\varphi}   
\theoremstyle{cupthm}
\newtheorem{theorem}{Theorem}[section]
\newtheorem{proposition}[theorem]{Proposition}
\newtheorem{corollary}[theorem]{Corollary}
\newtheorem{lemma}[theorem]{Lemma}
\begin{document}
\title{Distinct solutions to generated Jacobian equations cannot intersect}
\author[C. Rankin]{Cale Rankin}
\email{cale.rankin@anu.edu.au}
\address{Australian National University}
\thanks{Supported by an Australian Government Research Training Program (RTP) Scholarship.\\
  Partially supported by Australian Research Council Grant DP180100431}
\subjclass[2010]{primary 35J60; secondary 35J96}
\keywords{Jacobian equations, Monge--Amp\`ere equations, Uniqueness}

 \begin{abstract}
We prove that if two $C^{1,1}(\Omega)$ solutions of the second boundary value problem for the generated Jacobian equation intersect in $\Omega$ then they are the same solution. In addition we extend this result to $C^{2}(\ol{\Omega})$ solutions intersecting on the boundary, via an additional convexity condition on the target domain.   
 \end{abstract}

\maketitle
\section{Introduction}\label{sec:intro}

The prescribed Jacobian equation coupled with the second boundary value problem arises in optimal transport and geometric optics. These equations, with their boundary condition, take the form
\begin{align}
\label{eq:pje}  \det DY(\cdot,u,Du) &= \frac{f(x)}{f^*(Y(\cdot,u,Du))} \ \ \text{in } \Omega,\\
\label{eq:2bvp}  Y(\cdot,u,Du)(\Omega) &= \Omega^*,
\end{align}
where $Y:\R^n\times\R\times\R^n\ra\R^n$,  and the functions $f,f^*$ are positive densities on the prescribed domains $\Omega,\Omega^* \subset \R^n$. Such equations have not been profitably studied without additional structure on $Y$. In this paper we require that $Y$ arise from a generating function and thus work in the framework of generated Jacobian equations (GJE), which were introduced by Trudinger \cite{truloc}.  Since $Y$ depends on $u$ in an unknown way we no longer have uniqueness of solutions (even up to a constant). In this paper we prove a version of a uniqueness result: that distinct solutions cannot intersect at any point in the domain. 

\begin{theorem}\label{thm:main}
  Suppose $g$ is a generating function on $\Gamma$ satisfying A1,A1$^{*}$,A2 and $f,f^* > 0$ are $C^1$ and satisfy the mass balance condition \eqref{eq:mb}. Suppose $u,v \in C^{1,1}(\Omega)$ are $g$-convex generalized solutions of (\ref{eq:pje}) subject to (\ref{eq:2bvp}). If there is $x_0 \in \Omega$ such that $u(x_0) = v(x_0)$, then $u \equiv v$ in $\Omega$. 
\end{theorem}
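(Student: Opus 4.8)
The plan is to reduce the statement to a one-sided comparison principle and then establish that principle by linearising the equation and applying the strong maximum principle. The reduction step is where I expect the genuine difficulty to lie.

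First I would isolate the following one-sided claim: \emph{if, in addition to the hypotheses of the theorem, $u \le v$ throughout $\Omega$ and $u(x_0)=v(x_0)$, then $u\equiv v$.} Granting this, the general case follows by comparing with $w:=\max(u,v)$. Since any $g$-support of $u$ (respectively of $v$) at a point where that function attains the maximum is automatically a $g$-support of $w$, the function $w$ is again $g$-convex, and $w\ge u$ with $w(x_0)=u(x_0)$. If $w$ can be shown to be again a solution of \eqref{eq:pje}--\eqref{eq:2bvp}, then the one-sided claim applied to $u\le w$ gives $w\equiv u$, i.e. $u\ge v$ on $\Omega$; a second application of the one-sided claim (now to $v\le u$) yields $u\equiv v$.

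Second, for the one-sided claim I would argue by linearisation. Writing $w=v-u\ge 0$ and subtracting the two instances of \eqref{eq:pje}, the fundamental theorem of calculus along the segment $u_t=(1-t)u+tv$ represents $0=\det DY(\cdot,v,Dv)-\det DY(\cdot,u,Du)$ in the form $Lw=0$, where $L=a^{ij}\partial_{ij}+b^i\partial_i+c$ has measurable coefficients gotten by integrating the linearised operator in $t$. Because $u,v\in C^{1,1}(\Omega)$ and $f,f^*$ are pinched between positive constants, the matrices $DY(\cdot,u_t,Du_t)$ have determinant bounded above and below, so the cofactor matrix $(a^{ij})$ is uniformly positive definite and $L$ is uniformly elliptic with bounded coefficients. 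The monotonicity inherent in the structure—the sign condition $g_z<0$ from the definition of a generating function, together with A2—should render the equation proper, so that $c\le 0$. Then $w\ge 0$, $Lw=0$, $c\le 0$, and $w(x_0)=0$ make $x_0$ an interior nonpositive minimum, and E.~Hopf's strong maximum principle forces $w\equiv 0$.

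The step I expect to be the real obstacle is verifying that $w=\max(u,v)$ is itself a solution. On the open sets $\{u>v\}$ and $\{u<v\}$ the $g$-Monge--Amp\`ere measure $\mu_w$ coincides with $f\,dx$, and on the contact set it can only grow, so $\mu_w\ge f\,dx$ with a possible nonnegative singular part carried by $\{u=v,\ Du\ne Dv\}$. Using that the $g$-gradient map of a $g$-convex function is injective off a null set, one has $\mu_w(\Omega)=\int_{Y(\cdot,w,Dw)(\Omega)}f^*$, and the mass balance $\int_\Omega f=\int_{\Omega^*}f^*$ then forces the singular part to vanish and $Y(\cdot,w,Dw)(\Omega)=\Omega^*$ \emph{provided} the image $Y(\cdot,w,Dw)(\Omega)$ is contained in $\Omega^*$. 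Controlling this image is the crux: at a contact point the supports of $w$ fan out and their targets interpolate between $Y(\cdot,u,Du)$ and $Y(\cdot,v,Dv)$ along a set that need not stay inside $\Omega^*$. Keeping this fan contained is precisely where the interior location of the intersection must be exploited, and it is also the point at which, for intersections on the boundary, one is instead compelled to impose the convexity condition on $\Omega^*$ advertised in the abstract; I would accordingly concentrate most of the work on making this containment rigorous.
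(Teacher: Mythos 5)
Your overall architecture (compare with $w=\max(u,v)$, then linearise and invoke a maximum/Harnack principle for the nonnegative difference) is the same as the paper's, and your linearisation sketch is essentially the paper's Proposition \ref{prop:harn}. But there is a genuine gap, and it sits exactly where you admit it does: you never establish that $w=\max(u,v)$ is a solution, and the route you sketch for doing so does not close. The paper's key missing ingredient is Corollary 3.2: \emph{wherever $u=v$ one already has $Du=Dv$}. This is proved \emph{before} and independently of the max construction, by adapting Alexandrov's lemma as used by McCann: if $u(x_0)=v(x_0)$ but $Du(x_0)\neq Dv(x_0)$, then with $\Omega'=\{u>v\}$ the set $\Xi=Y_v^{-1}(Y_u(\Omega'))$ is contained in $\Omega'$ but stays a positive distance from $x_0$, so $|\Xi|<|\Omega'|$; pushing forward by $Y_v$ and using the second boundary value condition together with the change of variables formula then contradicts $\int_{\Omega'}f=\int_{Y_u(\Omega')}f^*$. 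Once this is known, $w$ is automatically $C^{1,1}(\Omega)$ (no kink on the contact set), the ``fan'' of supports you worry about simply does not exist, $Y(\cdot,w,Dw)(\Omega)\subset Y_u(\Omega)\cup Y_v(\Omega)\subset\ol{\Omega^*}$ is immediate, and $w$ satisfies \eqref{eq:mate} almost everywhere with no singular part. Your proposed substitute --- ``$\mu_w\ge f\,dx$ plus mass balance kills the singular part \emph{provided} the image stays in $\Omega^*$'' --- is circular as stated: the containment of the image is exactly what fails if the fan is nontrivial, and you give no mechanism for ruling the fan out. Note also that the paper's mechanism is not about the interior location of $x_0$ per se; it is the measure-theoretic argument above, which itself consumes the boundary condition and mass balance.

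Two smaller points. First, your claim that $g_z<0$ and A2 force the zeroth-order coefficient $c$ of the linearised operator to be nonpositive is unjustified (there is no reason $-a^{ij}(A_{ij})_u-\tilde B_u\le 0$ in general) --- but it is also unnecessary: since the minimum value of $u-v$ at $x_0$ is exactly $0$, one may replace $c$ by $-c^-$ at the cost of a nonpositive right-hand side, or, as the paper does, apply the weak Harnack inequality to the supersolution $w-v\ge 0$ with $\inf(w-v)=0$; no sign condition on $c$ is needed. Second, your one-sided comparison principle requires the larger function in the comparison to be a $C^{1,1}$ strong solution for the linearisation to make sense, so even the reduction step cannot proceed until the gradient-agreement lemma is in hand.
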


Our plan is as follows: In Section \ref{sec:gen} we introduce the theory of generating functions and the definitions required to understand the statement of Theorem \ref{thm:main}. In Section \ref{sec:grad} we prove, using a lemma of Alexandrov's, that wherever solutions intersect they have the same gradient. We show in Section \ref{sec:harnack} a weak Harnack inequality that we use in Section \ref{sec:tang} to prove solutions intersecting in the interior of $\Omega$ are the same. Finally in Section 6 we give conditions which yield the same result when $x_0 \in \d\Omega$.

\section{Generated Jacobian equations and $g$-convexity}
\label{sec:gen}

The following framework is standard for GJE and mirrors \cite{JT2}. Further details on GJE may also be found in \cite{GK,Jhaveri}.  Let $\Gamma \subset \R^n\times\R^n\times \R$ be a domain for which the projections
\begin{equation}
  \label{eq:i}
  I(x,y):= \{z \in \R; (x,y,z) \in \Gamma\},
\end{equation}
are (possibly empty) open intervals. We consider a function $g \in C^4(\Gamma)$ which we assume satisfies the following properties.\\
\textbf{A1: }For each $(x,u,p)$ in $\mathcal{U}$, which is defined as 
\[ \mathcal{U} = \{(x,g(x,y,z),g_x(x,y,z)); (x,y,z) \in \Gamma\},\]
there exists a unique $(x,y,z)\in\Gamma$ such that
\begin{align*}
  &g(x,y,z) = u, && g_x(x,y,z) = p.
\end{align*}
\textbf{A1$^*$: } For each fixed $(y,z)$ the mapping
\[ x \ra \frac{-g_y}{g_z}(x,y,z),\] is one to one. \\
\textbf{A2: } $g_z<0$ and 
\[ E:= g_{x,y}-(g_z)^{-1}g_{x,z}\otimes g_y,\]
satisfies $\det E \neq 0$. 

Assumption A1 allows us to define mappings $Y:\mathcal{U} \ra \R^n$ and $Z:\mathcal{U}\ra \R$ by the requirement that they uniquely solve
\begin{align}
\label{eq:yzdef1}  g(x,Y(x,u,p),Z(x,u,p)) &= u,\\
\label{eq:yzdef2}  g_x(x,Y(x,u,p),Z(x,u,p)) &= p.
\end{align}
Herein we assume that $Y$ is the mapping appearing in \eqref{eq:pje} and \eqref{eq:2bvp}. This assumption allows us to rewrite \eqref{eq:pje} as a Monge-Amp\`ere type equation as follows. Setting $u=u(x)$, $p = Du(x)$ and differentiating \eqref{eq:yzdef1} with respect to the $j^{th}$ coordinate yields
\[g_{x_j}+g_{y_k}D_jY^k+g_zD_jZ = D_ju,\]
and since $g_x = Du$ we have
\begin{equation}
  \label{eq:matederive1}
  D_jZ = -\frac{1}{g_z}g_{y_k}D_jY^k. 
\end{equation}
Similarly differentiating \eqref{eq:yzdef2} yields
\begin{equation}
g_{x_i, x_j} +g_{x_i,y_k}D_jY^k+g_{x_i, z}D_jZ = D_{ij}u.\label{eq:2}
\end{equation}
We substitute \eqref{eq:matederive1} into \eqref{eq:2} and obtain
\[ (g_{x_i, y_k}-\frac{1}{g_z}g_{x_i,z}g_{y_k})D_jY^k = D_{ij}u-g_{x_ix_j}.\]
Thus, with $E$ as defined in A2,
\[ DY(x,u,Du) = E^{-1}[D^2u - g_{xx}(x,Y(x,u,Du),Z(x,u,Du))], \]
and we rewrite \eqref{eq:pje} as 
\begin{equation}
  \label{eq:mate}
  \det[D^2u - A(\cdot,u,Du)] = B(\cdot,u,Du),
\end{equation}
where
\begin{align*}
  A(\cdot,u,Du) &= g_{xx}(\cdot,Y(\cdot,u,Du),Z(\cdot,u,Du)),\\
  B(\cdot,u,Du) &= \det E \frac{f(x)}{f^*(Y(\cdot,u,Du))}.
\end{align*}
The PDE \eqref{eq:mate} is degenerate elliptic when $D^2u \geq g_{xx}$.

The assumptions on $g$ allow for the introduction of a convexity theory where $g$ plays the role of a supporting hyperplane. A function $u: \Omega \ra \R$ is called $g$-convex if for every $x_0\in\Omega$ there exists $y_0,z_0$ such that
  \begin{align}
\label{eq:gconvdef1}    u(x_0) &= g(x_0,y_0,z_0),\\
 \label{eq:gconvdef2}   u(x) &\geq g(x,y_0,z_0),
  \end{align}
for all $x \in \Omega$. We call $g(\cdot,y_0,z_0)$ a $g$-support at $x_0$. 

Suppose $u$ is a differentiable $g$-convex function and $g(\cdot,y_0,z_0)$ is a $g$-support at $x_0$. Then $x \mapsto u(x)-g(x,y_0,z_0)$ has a minimum at $x_0$. Hence $Du(x_0) = g_x(x_0,y_0,z_0)$ which, with \eqref{eq:gconvdef1}, implies via \eqref{eq:yzdef1} and (\ref{eq:yzdef2}), that $y_0 = Y(x_0,u(x_0),Du(x_0))$. Furthermore if $u$ is $C^2$ then $D^2u - g_{xx}$ is nonnegative definite and the equation is degenerate elliptic. 

In this article we work with generalized solutions. A definition of generalized solution exists for functions which are merely $g$-convex \cite{truloc}. However our results rely on differentiability so we give the definition of a differentiable $g$-convex generalized solution. A differentiable $g$-convex function $u:\Omega \ra \R$ is called a generalized solution of \eqref{eq:pje} if for every $E\subset \Omega$
\begin{equation}
  \label{eq:8}
  \int_{Y(\cdot,u,Du)(E)}f^*(y) \ dy = \int_Ef(x)\ dx,
\end{equation}
where $f^*$ is extended to 0 outside $\Omega^*$. If in addition $Y(\cdot,u,Du)(\Omega) \subset \ol{\Omega^*}$ we say $u$ is a generalized solution of \eqref{eq:pje} subject to \eqref{eq:2bvp}, that is, a generalized solution of the second boundary value problem. Note that under the mass balance condition
\begin{equation}
  \label{eq:mb}
  \int_{\Omega}f = \int_{\Omega^*}f^*,
\end{equation}
which is necessary for classical solvability, generalized solutions of the second boundary value problem satisfy
\begin{equation}
Y(\cdot,u,Du)(\Omega) = \ol{\Omega^*}\setminus \mathcal{Z} \label{eq:9},
\end{equation}
for some set $\mathcal{Z}$ of Lebesgue measure 0.

Moreover any generalized solution which is $C^{1,1}(\Omega)$ and thus twice differentiable almost everywhere satisfies both \eqref{eq:pje} and \eqref{eq:mate} almost everywhere in $\Omega$.

\section{Solutions have the same gradients where they intersect} \label{sec:grad}
In this section we show generalized solutions of \eqref{eq:pje} subject to (\ref{eq:2bvp}) satisfy $Du \equiv Dv$ on $\{x \in \Omega; u(x) = v(x) \}$. 
Our main tool is a lemma concerning arbitrary convex functions due to Alexandrov \cite{al} and used by McCann \cite[Lemma 13]{mccann} in the Monge-Amp\`ere case. We adapt McCann's proof to the $g$-convex case. We  use the notation $Y_u(x) = Y(x,u(x),Du(x))$ and similarly for $Y_v,Z_u,Z_v$.

\begin{lemma}\label{lem:aleksandrovmccann}
Assume $u,v:\Omega \ra \R$ are $g$-convex and differentiable. Suppose for some $x_0 \in \Omega$ there holds $u(x_0) = v(x_0)$ and $Du(x_0) \neq Dv(x_0)$. 
  With $\Omega':=\{x \in \Omega; u(x)>v(x)\}$ set $\Xi := Y_v^{-1}(Y_u(\Omega'))$. Then $\Xi \subset \Omega'$ and $x_0$ is a positive distance from $\Xi$. 
\end{lemma}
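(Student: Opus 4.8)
I would follow McCann's strategy, replacing the single geometric fact he uses for affine functions with its $g$-convex counterpart. For convex functions the engine is that two affine supports with the \emph{same} slope differ by a constant, so an inequality between them at one point holds at every point. The $g$-convex analogue I would isolate first is a monotonicity statement: by A2 we have $g_z<0$, so for fixed $y$ the map $z\mapsto g(x,y,z)$ is strictly decreasing \emph{at every} $x$ simultaneously. Consequently, two $g$-supports sharing the same $y$-value are globally ordered by their $z$-parameter, i.e. $z_1<z_2$ forces $g(\cdot,y,z_1)>g(\cdot,y,z_2)$ throughout $\Omega$. This is the non-crossing property that substitutes for ``parallel planes''. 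Throughout I would use that a differentiable $g$-convex function is automatically $C^1$ (being locally semiconvex), so that $Y_v,Z_v$ and $Y_u,Z_u$ are continuous along any sequence.

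For the inclusion $\Xi\subset\Omega'$: take $x\in\Xi$, so there is $x'\in\Omega'$ with $Y_u(x')=Y_v(x)=:y$. Write $z_1=Z_u(x')$ and $z_2=Z_v(x)$, so that $g(\cdot,y,z_1)$ supports $u$ at $x'$ (equality there) and $g(\cdot,y,z_2)$ supports $v$ at $x$ (equality there), both from below as in \eqref{eq:gconvdef2}. Evaluating at $x'$ gives $g(x',y,z_1)=u(x')>v(x')\ge g(x',y,z_2)$, and strict monotonicity of $g(x',y,\cdot)$ yields $z_1<z_2$. Evaluating instead at $x$, and using the global ordering $g(\cdot,y,z_1)>g(\cdot,y,z_2)$, I get $u(x)\ge g(x,y,z_1)>g(x,y,z_2)=v(x)$, so $x\in\Omega'$.

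For the positive-distance claim I would argue by contradiction, assuming $x_k\in\Xi$ with $x_k\to x_0$. Each $x_k$ supplies $x_k'\in\Omega'$, a common $y_k=Y_v(x_k)=Y_u(x_k')$, and parameters $z_{1,k}=Z_u(x_k')<z_{2,k}=Z_v(x_k)$ with
\[ 0\le g(x_k,y_k,z_{1,k})-g(x_k,y_k,z_{2,k})\le u(x_k)-v(x_k)\longrightarrow u(x_0)-v(x_0)=0. \]
By continuity $y_k\to Y_v(x_0)=:y_0$ and $z_{2,k}\to Z_v(x_0)=:z_0$, so the pinched gap forces $g(x_k,y_k,z_{1,k})\to g(x_0,y_0,z_0)$; since $g(x_0,y_0,\cdot)$ is a strictly decreasing bijection onto its range, this pins $z_{1,k}\to z_0$ (a blow-up of $z_{1,k}$ would make the gap explode, keeping the sequence compact in the interval $I(x_0,y_0)$). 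Passing to the limit in $u\ge g(\cdot,y_k,z_{1,k})$ gives $u\ge g(\cdot,y_0,z_0)$ on $\Omega$. But $(y_0,z_0)=(Y_v(x_0),Z_v(x_0))$, so $g(\cdot,y_0,z_0)$ is precisely the $g$-support of $v$ at $x_0$ and $u(x_0)=v(x_0)=g(x_0,y_0,z_0)$; thus $u-g(\cdot,y_0,z_0)$ attains its minimum value $0$ at $x_0$, whence $Du(x_0)=g_x(x_0,y_0,z_0)=Dv(x_0)$ by \eqref{eq:yzdef2}, contradicting the hypothesis $Du(x_0)\neq Dv(x_0)$.

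The inclusion part is the routine transcription of McCann's argument once the $z$-monotonicity is noted; the main obstacle is the distance estimate, since $\Xi\subset\Omega'$ alone only gives $x_0\notin\Xi$ (as $\Omega'$ is open and excludes $x_0$) and does not by itself prevent accumulation at $x_0$. The crux is extracting convergent support data $(y_k,z_{1,k})$ from the vanishing gap and recognizing that their limiting $g$-support must coincide with the $g$-support of $v$ at $x_0$; the technical care needed is exactly the compactness of the $z$-parameters inside the intervals $I(x,y)$ and the $C^1$ regularity that keeps $Y_v,Z_v$ continuous.
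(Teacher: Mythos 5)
Your proof is correct. The inclusion $\Xi\subset\Omega'$ is argued exactly as in the paper: both proofs reduce it to the ordering $Z_u(x')<Z_v(x)$ of the $z$-parameters of the two $g$-supports sharing the value $y=Y_u(x')=Y_v(x)$, obtained from $g_z<0$ and the support inequalities, and then propagate that ordering to every point. For the positive-distance claim the two arguments share the same skeleton (contradiction via a sequence $x_k\in\Xi$ accumulating at $x_0$, ending with $u\ge g(\cdot,Y_v(x_0),Z_v(x_0))$ on a neighbourhood of $x_0$ and hence $Du(x_0)=Dv(x_0)$), but they execute it differently. The paper only ever tracks the supports of $v$ at the points $\xi_n\in\Xi$: the strict inequality \eqref{eq:ineq} from the inclusion step gives $u>g(\cdot,Y_v(\xi_n),Z_v(\xi_n))$ everywhere, the parameters $(Y_v(\xi_n),Z_v(\xi_n))$ converge to $(Y_v(x_0),Z_v(x_0))$ automatically by continuity of $Dv$, and the contradiction is closed against a point $z$ near $x_0$ where $u(z)<g(z,Y_v(x_0),Z_v(x_0))$, extracted up front from $Du(x_0)\neq Dv(x_0)$ as in \eqref{eq:m1}. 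You instead track the supports of $u$ at the auxiliary points $x_k'$, whose location is uncontrolled, so you must separately establish that $z_{1,k}=Z_u(x_k')$ converges to $Z_v(x_0)$; this is the pinched-gap/compactness step, which is the genuinely extra burden of your route. Your treatment of it is essentially right but stated a little informally: to rule out $z_{2,k}-z_{1,k}\ge\delta$ along a subsequence one should note that then $g(x_k,y_k,z_{1,k})-g(x_k,y_k,z_{2,k})\ge g(x_k,y_k,z_{2,k}-\delta)-g(x_k,y_k,z_{2,k})\to g(x_0,y_0,z_0-\delta)-g(x_0,y_0,z_0)>0$ (using that the intervals $I(x,y)$ are open so $z_0-\delta$ is admissible for small $\delta$), contradicting the vanishing gap; the phrase about the gap ``exploding'' is not quite what happens, but the monotonicity bound above is all that is needed. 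In short: same inclusion proof, and a correct but slightly heavier variant of the distance proof that the paper avoids by passing to the limit in the $v$-supports rather than the $u$-supports.
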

\begin{proof}
  We begin by proving the subset assertion. Take $\xi \in \Xi$. The definition of $\Xi$ implies there is $x \in \Omega'$ with $Y_v(\xi) = Y_u(x)$. We claim $Z_u(x) < Z_v(\xi)$. Indeed, were this not the case $Z_u(x) \geq Z_v(\xi)$, which when combined with $Y_v(\xi) = Y_u(x)$ and $g_z < 0$ yields that for any $z$
  \[ g(z,Y_u(x),Z_u(x)) \leq g(z,Y_v(\xi),Z_v(\xi)).\]
  This would imply
  \begin{align*}
    u(x) &= g(x,Y_u(x),Z_u(x))\\
         &\leq g(x,Y_v(\xi),Z_v(\xi))\\
    &\leq v(x),
  \end{align*}
  where the final inequality is because $g(\cdot,Y_v(\xi),Z_v(\xi))$ is a $g$-support. Since $x \in \Omega'$ this contradiction establishes $Z_u(x) < Z_v(\xi)$. Using this and $g_z<0$ we have for any $z$
  \begin{align}
   \nonumber u(z) &\geq g(z,Y_u(x),Z_u(x))\\
   \label{eq:ineq}  &> g(z,Y_v(\xi),Z_v(\xi)).
  \end{align}
  For $z = \xi$ we obtain $u(\xi) > g(\xi,Y_v(\xi),Z_v(\xi)) = v(\xi)$, implying $\xi \in \Omega'$ and establishing the subset relation.

  We move on to the distance claim. We suppose to the contrary that there exists a sequence of  $\{\xi_n\}_{n=1}^\infty$ in $\Xi$ with $\xi_n \ra x_0$. The definition of $\Xi$ implies for each $\xi_n$ there exists an $x_n \in \Omega'$ with $Y_v(\xi_n) = Y_u(x_n).$

  Now $Du(x_0) \neq Dv(x_0)$ implies in any neighbourhood of $x_0$ there is a particular $z$ for which 
  \begin{equation}
    \label{eq:m1}
    u(z) < g(z,Y_v(x_0),Z_v(x_0)),
  \end{equation}
  for if not we have
  \begin{align*}
    u(x_0) &= v(x_0) = g(x_0,Y_v(x_0),Z_v(x_0))\\
    u(x) &\geq g(x,Y_v(x_0),Z_v(x_0)) \text{ in a neighbourhood of }x_0.
  \end{align*}
  This implies $u(\cdot)-g(\cdot,Y_v(x_0),Z_v(x_0))$ has a local minimum at $x_0$. Thus
  \[ Du(x_0) = g_x(x_0,Y_v(x_0),Z_v(x_0)) = Dv(x_0),\]
  and this contradiction establishes \eqref{eq:m1}. 

  Since  our derivation of \eqref{eq:ineq} used only that $x \in \Omega'$ and $\xi \in \Xi$ satisfied $Y_v(\xi) = Y_u(x)$, (\ref{eq:ineq}) also holds for  for $x_n$ and $\xi_n$. That is for any $z$ we have
  \begin{equation}
  u(z) > g(z,Y_v(\xi_n),Z_v(\xi_n)) .\label{eq:m2}
\end{equation}

Combining \eqref{eq:m1} and \eqref{eq:m2} we obtain
  \[ g(z,Y_v(x_0),Z_v(x_0)) > u(z) >g(z,Y_v(\xi_n),Z_v(\xi_n)), \]
  which, on sending $\xi_n\ra x_0$ yields a contradiction and completes the proof of Lemma \ref{lem:aleksandrovmccann}. 
\end{proof}

We use this lemma to show solutions solutions have the same gradient where they intersect.

\begin{corollary}
Assume the conditions of Theorem \ref{thm:main}. Then $Du \equiv Dv$ on the set $\{x \in \Omega; u(x) = v(x)\}$.  
\end{corollary}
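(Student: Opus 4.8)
The plan is to argue by contradiction using Lemma~\ref{lem:aleksandrovmccann}, playing the measure-preserving property \eqref{eq:8} of the two solutions against the strict containment $\Xi \subset \Omega'$. Suppose there is a point $x_0$ with $u(x_0)=v(x_0)$ but $Du(x_0)\neq Dv(x_0)$. Since the hypotheses of the lemma are satisfied at $x_0$, we obtain the set $\Omega'=\{u>v\}$, its image $Y_u(\Omega')$, and $\Xi:=Y_v^{-1}(Y_u(\Omega'))$, with the two conclusions $\Xi\subset\Omega'$ and $\mathrm{dist}(x_0,\Xi)>0$.

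The key idea is that both solutions push the density $f$ forward to $f^*$ via their respective maps $Y_u$ and $Y_v$. Applying \eqref{eq:8} to $u$ on the set $\Omega'$ and to $v$ on the set $\Xi$ gives
\begin{align*}
  \int_{\Omega'} f(x)\,dx &= \int_{Y_u(\Omega')} f^*(y)\,dy,\\
  \int_{\Xi} f(x)\,dx &= \int_{Y_v(\Xi)} f^*(y)\,dy.
\end{align*}
By the definition of $\Xi$ we have $Y_v(\Xi)=Y_u(\Omega')$ up to a set of $f^*$-measure zero (this uses \eqref{eq:9} and the fact that the maps are defined a.e.\ for $C^{1,1}$ solutions), so the right-hand sides agree and hence $\int_{\Omega'} f = \int_{\Xi} f$. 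Since $f>0$, this forces $|\Omega'\setminus\Xi|=0$. But $\Xi\subset\Omega'$, so in fact $|\Omega'|=|\Xi|$ and the two sets agree up to measure zero; crucially, $\Xi$ avoids a fixed neighbourhood of $x_0$ while $\Omega'$ does not necessarily do so. I would then show this is incompatible: because $u(x_0)=v(x_0)$ and $Du(x_0)\neq Dv(x_0)$, the set $\Omega'=\{u>v\}$ must have $x_0$ in its closure and in fact occupy a nontrivial portion of every neighbourhood of $x_0$ (the difference $u-v$ vanishes to first order with nonzero gradient there), contradicting that $\Omega'$ coincides a.e.\ with a set bounded away from $x_0$.

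The main obstacle I anticipate is making the measure-theoretic bookkeeping rigorous, in two respects. First, one must justify that $Y_v(\Xi)$ and $Y_u(\Omega')$ agree up to $f^*$-null sets despite the maps being defined only almost everywhere and the possible null set $\mathcal{Z}$ in \eqref{eq:9}; this requires care with how $Y_v^{-1}$ is interpreted and an appeal to the a.e.\ twice-differentiability of $C^{1,1}$ functions. Second, and more delicately, one must quantify the claim that $\{u>v\}$ has positive density near $x_0$ when the gradients differ there—comparing $u$ and $v$ near $x_0$ via their common value and distinct gradients shows $u-v$ changes sign across the hyperplane $\{(Du(x_0)-Dv(x_0))\cdot(x-x_0)=0\}$, so $\Omega'$ contains, to leading order, a half-space worth of points arbitrarily close to $x_0$. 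Reconciling this with $\mathrm{dist}(x_0,\Xi)>0$ and $|\Omega'\triangle\Xi|=0$ is where the contradiction crystallizes, and it is the step where I would spend the most effort.
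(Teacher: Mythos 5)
Your argument is correct and follows essentially the same route as the paper: Lemma~\ref{lem:aleksandrovmccann} plus the push-forward identity forces $\Xi$ and $\Omega'$ to carry the same $f$-mass, which is incompatible with $\Xi$ missing a ball around $x_0$ on which $\Omega'$ has positive measure. The only (harmless) difference is that you apply the generalized-solution identity \eqref{eq:8} directly to $v$ on the set $\Xi$, whereas the paper reaches the same comparison via the change-of-variables formula for $Y_v$ combined with the almost-everywhere pointwise equation.
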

\begin{proof}
  Suppose otherwise. Then there is $x_0 \in \Omega$ with $u(x_0) = v(x_0)$ and $Du(x_0) \neq Dv(x_0)$. This implies any neighbourhood of $x_0$ contains a $z$ with $u(z) > v(z)$, which is to say $x_0 \in \d \Omega' \cap \Omega$. By the previous lemma, for $\e$ sufficiently small $B_\e(x_0) \cap \Xi = \emptyset$ and thus $\Xi \subset \Omega'\setminus B_\e(x_0).$ On the other hand, since $x_0 \in \d \Omega',$ and $u$ is continuous, $|B_\e(x_0) \cap \Omega'| > 0$. Hence
  \[ |Y_v^{-1}(Y_u(\Omega'))| = |\Xi| \leq  |\Omega' \setminus B_\e(x_0)| < |\Omega'|,\]
  and since $f^*$ is bounded below, this implies 
  \begin{equation}
   \int_{Y_v^{-1}(Y_u(\Omega'))}f^*(Y_v)\det DY_v \ dx  < \int_{\Omega'} f^*(Y_v)\det DY_v \ dx .\label{eq:4}
 \end{equation}

  The change of variables formula holds for the mappings $Y_u$ and $Y_v$ even though they may not be diffeomorphisms. The reasoning here is the same reasoning which yields the change of variables formula for the gradient of $C^{1,1}$ convex functions and uses the assumption A1$^*$ (see \cite[Theorem A.31]{FigalliBook} and \cite[\S 4]{truloc}). In light of this (\ref{eq:4}) yields the following contradiction:
  
 \begin{align}
    \label{eq:gen1}    \int_{\Omega'}f(x) \ dx  &= \int_{Y_u(\Omega')}f^*(y) \ dy\\
   \label{eq:gen2}                     &= \int_{Y_v(Y_v^{-1}(Y_u(\Omega')))}f^*(y) \ dy\\
   \nonumber                                   &= \int_{Y_v^{-1}(Y_u(\Omega'))}f^*(Y_v)\det DY_v \ dy\\
   \nonumber                          &< \int_{\Omega'} f^*(Y_v)\det DY_v \ dy= \int_{\Omega'}f(x) \ dx.
  \end{align}
  Here the equality between \eqref{eq:gen1} and \eqref{eq:gen2} uses the generalized boundary condition in conjunction with \eqref{eq:9} to deduce
  \[ Y_v(Y_v^{-1}(Y_u(\Omega'))) = Y_v(\Omega) \cap Y_u(\Omega') = Y_u(\Omega')\setminus\mathcal{Z},\]
  for some set $\mathcal{Z}$ with Lebesgue measure 0, hence the integrals over these sets are equal. 
\end{proof}

 \section{A weak Harnack inequality}
\label{sec:harnack}

\begin{proposition}\label{prop:harn} Suppose $u,v \in C^{1,1}(\Omega)$ satisfy (\ref{eq:pje}) almost everywhere and $u\geq v $ in $\Omega$.  Then for any $\tilde{\Omega} \subset\subset \Omega$ there exists $p,C>0$ such that
\begin{equation}
\Big(\frac{1}{|\tilde{\Omega}|}\int_{\tilde{\Omega}}(u-v)^p\Big)^{\frac{1}{p}} \leq C \inf_{\tilde{\Omega}}(u-v).\label{eq:harn}
\end{equation} 
\end{proposition}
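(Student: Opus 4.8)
The plan is to linearize the Monge--Amp\`ere type equation \eqref{eq:mate} satisfied by both $u$ and $v$, extract a linear uniformly elliptic equation for the difference $w = u - v \geq 0$, and then invoke the Krylov--Safonov weak Harnack inequality. First I would set $W_u = D^2 u - A(\cdot,u,Du)$ and $W_v = D^2 v - A(\cdot,v,Dv)$, which by degenerate ellipticity are nonnegative definite almost everywhere. Both are bounded above because $u,v \in C^{1,1}(\Omega)$ gives $D^2u, D^2v \in L^\infty(\t\Omega)$ and $A$ is continuous. Crucially, \eqref{eq:mate} gives $\det W_u = B(\cdot,u,Du)$ and $\det W_v = B(\cdot,v,Dv)$, and on $\t\Omega$ the quantity $B = \det E \cdot f/f^*(Y_\cdot)$ is bounded above and below by positive constants, since $\det E \neq 0$ by A2, $f$ is positive and continuous, and $f^*$ is positive and continuous on $\ol{\Omega^*} \supset Y_u(\t\Omega) \cup Y_v(\t\Omega)$. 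A determinant bounded below together with eigenvalues bounded above forces all eigenvalues of $W_u, W_v$ into a fixed interval $[\lambda_0, \Lambda_0]$ with $\lambda_0 > 0$. I regard this uniform positivity as the main point to secure: degenerate ellipticity alone would not suffice.

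Next I linearize using the cofactor identity
\[
\det W_u - \det W_v = a^{ij}(W_u - W_v)_{ij}, \qquad a^{ij} := \int_0^1 \mathrm{cof}\big(t W_u + (1-t) W_v\big)_{ij}\, dt.
\]
Since $t W_u + (1-t) W_v$ has eigenvalues in $[\lambda_0, \Lambda_0]$ for $t \in [0,1]$, its cofactor matrix has eigenvalues in $[\lambda_0^{n-1}, \Lambda_0^{n-1}]$, so $(a^{ij})$ is uniformly elliptic. Writing $(W_u - W_v)_{ij} = D_{ij}w - [A_{ij}(\cdot,u,Du) - A_{ij}(\cdot,v,Dv)]$ and expanding both the $A$-difference and the $B$-difference along the segment from $(v,Dv)$ to $(u,Du)$ by the fundamental theorem of calculus --- legitimate because $g \in C^4$ makes $A = g_{xx}\circ(Y,Z)$ smooth in its arguments, $f,f^*$ are $C^1$, and $u, Du, v, Dv$ are bounded on $\t\Omega$ --- produces bounded coefficients $b^k, c$ for which
\[
a^{ij}D_{ij}w + b^k D_k w + c\, w = 0 \quad \text{a.e. in } \t\Omega.
\]

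Finally I would apply the weak Harnack inequality for strong supersolutions. Since $w \in C^{1,1} \subset W^{2,n}_{\mathrm{loc}}$ and $w \geq 0$, writing $c\,w = c^+ w - c^- w$ and moving $c^+ w \geq 0$ to the right-hand side gives $a^{ij}D_{ij}w + b^k D_k w - c^- w \leq 0$, an operator with nonpositive zeroth-order coefficient for which $w$ is a nonnegative supersolution. The Krylov--Safonov estimate (as in Gilbarg and Trudinger, Theorem 9.22) then yields \eqref{eq:harn} on every ball $B_{2R}(y) \subset\subset \Omega$, and a standard Harnack chaining and covering argument upgrades this to the fixed compact subset $\t\Omega$, with $p$ and $C$ depending on $n$, the ellipticity constants $\lambda_0, \Lambda_0$, the bounds on $b^k$ and $c$, and on $\t\Omega$. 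The only steps demanding genuine care are the uniform positivity of $W_u, W_v$ and the boundedness of the linearized coefficients, both of which rest on the $C^{1,1}$ hypothesis together with the strict positivity of the densities.
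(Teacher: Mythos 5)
Your proof is correct and follows the paper's overall strategy --- linearize the Monge--Amp\`ere structure of \eqref{eq:mate} to exhibit $w=u-v$ as a nonnegative supersolution of a uniformly elliptic operator, then apply the weak Harnack inequality of \cite[Theorem 9.22]{gt} together with a covering argument --- but your linearization device is different. The paper Taylor-expands $h(t)=\log\det[t(D^2v-A(\cdot,v,Dv))+(1-t)(D^2u-A(\cdot,u,Du))]$ and uses concavity of $\log\det$ to discard the second-order remainder, obtaining the one-sided inequality $Lw\le 0$ with $a^{ij}$ the \emph{inverse} matrix of $D^2u-A(\cdot,u,Du)$ and lower-order terms built from $\tilde{B}=\log B$; you instead use the exact cofactor identity $\det W_u-\det W_v=\int_0^1\mathrm{cof}\bigl(tW_u+(1-t)W_v\bigr)_{ij}\,dt\,(W_u-W_v)_{ij}$, which produces an honest linear equation whose principal coefficient is the averaged cofactor matrix. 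Both devices are standard and land in the same place. Two points where your write-up is in fact more complete than the paper's: (i) you justify uniform ellipticity on $\tilde{\Omega}$ (determinant bounded below via positivity of $B$, eigenvalues bounded above via the $C^{1,1}$ bound, hence eigenvalues bounded below), which the paper needs just as much --- its $a^{ij}$ is an inverse matrix and its concavity step requires the convex combination to remain positive definite --- but never states; and (ii) you address the sign requirement on the zeroth-order coefficient in Theorem 9.22 by absorbing $c^{+}w\ge 0$ into the right-hand side, which the paper leaves implicit. Note that both arguments tacitly use $g$-convexity of $u$ and $v$ to get $D^2u-A(\cdot,u,Du)\ge 0$ almost everywhere; that hypothesis is not literally in the statement of the proposition but is supplied by the context in which it is applied.
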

\begin{proof}
Provided we are able to show $u-v$ is a supersolution of a homogeneous linear elliptic PDE this is a consequence of the weak Harnack inequality \cite[Theorem 9.22]{gt} and a covering argument. To apply the Harnack inequality to $u-v$ we recall $C^{1,1}(\Omega)\subset W^{2,\infty}_{\text{loc}}(\Omega)$ \cite[Theorem 4.5]{EvansGariepyBook}. We now show $w:=u-v$ satisfies
\begin{equation}
 Lw := a^{ij}D_{ij}w+b^kD_kw+cw \leq 0,\label{eq:lin}
\end{equation}
where
\begin{align*}
  a^{ij} &=  [D^2u-A(\cdot,u,Du)]^{ij},\\
b^i &= -a^{ij}(A_{ij})_{p_k}-\tilde{B}_{p_k},\\
c &= -a^{ij}(A_{ij})_u-\tilde{B}_u,
\end{align*}
and $\tilde{B} = \log B$. Now using (\ref{eq:mate}) we have, almost everywhere, 
\begin{align}
 \label{eq:tosub}0=\log\det[D^2v&-A(\cdot,v,Dv)] - \log\det[D^2u-A(\cdot,u,Du)]\\
&+\tilde{B}(\cdot,u,Du) - \tilde{B}(\cdot,v,Dv).\nonumber
\end{align}
A Taylor series for 
\[ h(t) := \log\det[t(D^2v-A(\cdot,v,Dv))+(1-t)(D^2u-A(\cdot,u,Du))],\]
yields 
\[ h(1) - h(0) = h'(0) + \frac{1}{2}h''(\tau),\]
for some $\tau$ in $[0,1]$. Concavity of $\log\det$ implies $h''(\tau) \leq 0$ and thus on computing $h'(0)$ we obtain
\begin{align}
 \label{eq:tosub1} \log\det[D^2v&-A(\cdot,v,Dv)] - \log\det[D^2u-A(\cdot,u,Du)]\\
&\leq a^{ij}D_{ij}(v-u) + a^{ij}(A_{ij}(\cdot,u,Du) - A_{ij}(\cdot,v,Dv)),\nonumber
\end{align}
where $a^{ij} = [D^2u-A(\cdot,u,Du)]^{ij}$. Thus \eqref{eq:tosub1} into \eqref{eq:tosub} implies
\begin{align}
 \label{eq:tosub2}0\leq a^{ij}&D_{ij}(v-u) + a^{ij}(A_{ij}(\cdot,u,Du) - A_{ij}(\cdot,v,Dv))\\
&+\tilde{B}(\cdot,u,Du) - \tilde{B}(\cdot,v,Dv).\nonumber
\end{align}
The mean value theorem yields
\[ A_{ij}(\cdot,u,Du) - A_{ij}(\cdot,v,Dv) = A_u(\cdot,w,p)(u-v) +A_{p_k}(\cdot,w,p)D_k(u-v), \]
 for some $w = t_1 v+(1-t_1)u$ and $p = t_2Dv + (1-t_2)Du$ and similarly for 
$\tilde{B}(\cdot,u,Du) -\tilde{B}(\cdot,v,Dv)$. Thus \eqref{eq:tosub2} becomes
\[0 \leq a^{ij}D_{ij}(v-u)-(a^{ij}(A_{ij})_{p_k}+\frac{B_{p_k}}{B})D_k(v-u)-(a^{ij}(A_{ij})_u+\frac{B_u}{B})(v-u) ,\]
which is \eqref{eq:lin} (multiply by $-1$ since $w = u-v$).
\end{proof}

\section{Solutions intersecting on the interior are the same}\label{sec:tang}
Here we provide the proof of Theorem \ref{thm:main}. The Harnack inequality implies that one solution cannot touch another from above. Now we show that given two distinct solutions, since their derivatives are equal where they intersect, their maximum is a $C^{1,1}(\Omega)$ solution touching from above --- a contradiction. 
\begin{proof}[Proof: (Theorem \ref{thm:main})]
  At the outset we fix $\tilde{\Omega} \subset\subset \Omega$ containing $x_0$. Since $u,v$ are $g$-convex the same is true for $w:=\max\{u,v\}$. Furthermore $Du \equiv Dv$ on $\{u=v\}$, implies $w$  is $C^{1,1}(\Omega)$. Thus we obtain that $w$ solves (\ref{eq:mate}) almost everywhere. Hence we may apply our weak Harnack inequality \eqref{eq:harn} to $w-v$ to obtain $w \equiv v$ in $\tilde{\Omega}$. The same argument yields $w \equiv u$ in $\tilde{\Omega}$ and hence $u \equiv v$ in $\Omega$ via continuity.  \end{proof}

\section{Solutions intersecting on the boundary are the same}
\label{sec:boundary}
We conclude by proving that if solutions intersect on the boundary then they are the same throughout the domain. We require a convexity assumption on the target domain $\Omega^*$. We say a $C^2$ connected domain $\Omega^*$ is $Y^*$ convex with respect to  $x \in \Omega$ and $ u \in \R$ provided there exists a defining function $\phi^* \in C^2(\ol{\Omega^*})$ satisfying
\begin{align*}
  &\phi^* < 0 \text{ in } \Omega^* &&\phi = 0 \text{ on }\d\Omega^*\\
  & D_{p}^2\phi^*(Y(x,u,p)) \geq 0 && |D\phi|\neq 0 \text{ on }\d\Omega^*.
\end{align*}

For a comparison between this and other definitions of domain convexity see \cite[Section 2.2]{LT}. In the same paper Liu and Trudinger prove that for $C^2(\ol{\Omega})$ solutions and 
\[ G(x,u,p) := \phi^*(Y(x,u,p)) \] the boundary condition
\[ G(\cdot,u,Du) = 0 \text{ on }\d\Omega,\]
is oblique, i.e satisfies $G_p \cdot \gamma > 0$ where $\gamma$ is the outer unit normal.

\begin{theorem}
  Assume the conditions of Theorem \ref{thm:main}. In addition assume $u,v \in C^{2}(\ol{\Omega})$ are generalized solutions of (\ref{eq:pje}) subject to (\ref{eq:2bvp}) and $\Omega^*$ is $Y^*$-convex with respect to each $x \in \Omega$ and an interval containing $u(\Omega)\cup v(\Omega)$. If there is $x_0 \in \d \Omega$ with $u(x_0) = v(x_0)$ then $u \equiv v$ in $ \ol{\Omega}$. 
\end{theorem}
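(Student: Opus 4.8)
The plan is to reduce to the interior theorem and then run a Hopf-type boundary point argument against the oblique boundary condition. First I would dispose of the trivial case: if $u$ and $v$ agree at any interior point, Theorem \ref{thm:main} already forces $u\equiv v$, so I may assume $\{u=v\}\cap\Omega=\emptyset$. Since $\Omega$ is connected and $\{u>v\}$, $\{v>u\}$ are disjoint open sets covering it, one is empty; relabelling, I assume $u>v$ throughout $\Omega$ while $u(x_0)=v(x_0)$ on the boundary. Thus $w:=u-v$ is nonnegative on $\ol{\Omega}$, strictly positive in $\Omega$, vanishes at $x_0$, and---being a difference of two $C^2(\ol{\Omega})$ solutions---satisfies the linear inequality $Lw\le 0$ everywhere, with $L$ the uniformly elliptic operator constructed in the proof of Proposition \ref{prop:harn}. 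Ellipticity is uniform on $\ol{\Omega}$ because $D^2u-A>0$ is continuous up to the compact boundary.

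Next I would extract the two boundary facts that collide. Since $x_0\in\d\Omega$ is a minimum of $w$ restricted to $\d\Omega$, every tangential derivative of $w$ vanishes there, so $Dw(x_0)=\lambda\gamma$ is purely normal, $\gamma$ the outer unit normal. The key is to show $\lambda=0$, and this is where the target convexity enters. Writing $G(x,z,p)=\phi^*(Y(x,z,p))$, both solutions satisfy $G(\cdot,u,Du)=0$ and $G(\cdot,v,Dv)=0$ on $\d\Omega$; in particular $Y_u(x_0),Y_v(x_0)\in\d\Omega^*$. I would then consider $h(t):=G\big(x_0,u(x_0),Dv(x_0)+t(Du(x_0)-Dv(x_0))\big)$, which is convex in $t$ by the $Y^*$-convexity hypothesis $D_p^2 G\ge 0$ (extended to $x_0$ by continuity). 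Because $u(x_0)=v(x_0)$, the endpoint data are precisely the boundary data of $v$ and of $u$, so $h(0)=h(1)=0$, and the obliqueness $G_p\cdot\gamma>0$ established by Liu and Trudinger \cite{LT} at genuine solution boundary data holds at both $t=0$ and $t=1$. Convexity together with $h(0)=h(1)$ gives $h'(0)\le 0\le h'(1)$; since $Dw(x_0)=\lambda\gamma$ makes $h'(0)=\lambda\,G_p(x_0,u(x_0),Dv(x_0))\cdot\gamma$ and $h'(1)=\lambda\,G_p(x_0,u(x_0),Du(x_0))\cdot\gamma$ with both obliqueness factors strictly positive, I conclude $\lambda\le 0$ and $\lambda\ge 0$, hence $Dw(x_0)=0$.

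Finally I would contradict this with the boundary point lemma. Replacing $L$ by $L-c^+$ (whose zeroth-order coefficient $-c^-\le 0$) and noting $(L-c^+)w\le -c^+w\le 0$ since $w\ge 0$, I obtain a nonnegative supersolution of a uniformly elliptic operator with nonpositive zeroth-order term, strictly positive in $\Omega$ and vanishing at $x_0$. The interior ball condition at $x_0$, valid since $\d\Omega$ is $C^2$, lets me invoke Hopf's lemma to get the strict inequality $D_\gamma w(x_0)<0$. This is incompatible with $Dw(x_0)=0$, so the standing assumption $u>v$ in $\Omega$ is untenable and $u\equiv v$ on $\ol{\Omega}$.

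I anticipate the main obstacle to be the middle step. The naive idea of linearising the two boundary conditions into a single oblique condition $\beta\cdot Dw+c_0w=0$ for $w$ fails, because averaging $G_p\cdot\gamma$ along the segment produces exactly $\big(h(1)-h(0)\big)/\lambda=0$, so the averaged coefficient is \emph{never} oblique. The resolution is to retain the convexity of $h$ and apply obliqueness only at the endpoints, where the arguments are genuine solution boundary data with $Y\in\d\Omega^*$; correctly pairing the convexity with endpoint obliqueness, together with the sign bookkeeping for $c$ needed to apply Hopf's lemma, are the delicate points.
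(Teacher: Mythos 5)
Your proposal is correct and follows essentially the same route as the paper: reduce to the interior case via Theorem \ref{thm:main}, apply Hopf's lemma to $u-v$ (handling the zeroth-order term using $u(x_0)=v(x_0)$), and derive the opposing sign for $D_\gamma(u-v)(x_0)$ from convexity of $t\mapsto G(x_0,u(x_0),\cdot)$ along the segment joining $Du(x_0)$ and $Dv(x_0)$ together with obliqueness. Your version merely makes explicit a few points the paper leaves implicit (the purely normal direction of $D(u-v)(x_0)$, obliqueness at both endpoints yielding $\lambda=0$ rather than just $\lambda\ge 0$), which does not change the argument.
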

\begin{proof}
 Using Theorem \ref{thm:main} it suffices to prove there is $x \in \Omega$ with $u(x) = v(x)$.  For a contradiction suppose at some $x_0$ in $\d \Omega$ we have $u(x_0) = v(x_0)$, yet in $\Omega$ there holds $u > v$. Hopf's lemma (\cite[Lemma 3.4]{gt}) yields
\begin{equation}
  \label{eq:hopf}
  D_\gamma(u-v)(x_0) < 0.
\end{equation}
Here we used that the linear elliptic inequality \eqref{eq:lin} is uniformly elliptic under the assumption $u \in C^{2}(\ol{\Omega})$ and that no sign condition is needed on the lowest order coefficient in \eqref{eq:lin} since $u(x_0)-v(x_0)=0.$ 

Consider the function $h(t) :=G(x_0,u(x_0),tDv(x_0)+(1-t)Du(x_0))$. A Taylor series yields
\[ h(1) = h(0) + h'(0)+h''(\tau)/2,\]
for some $\tau \in [0,1]$. Since $u(x_0) = v(x_0)$ we have $h(1), h(0) =0$. Furthermore convexity implies $h''(\tau) \geq 0$ and hence
\[ 0 \geq h'(0) = G_p \cdot D(v-u),\]
or equivalently $0 \leq G_p\cdot D(u-v).$
Combined with obliqueness we have
\[  D_\gamma(u-v)(x_0) \geq 0,\]
which contradicts \eqref{eq:hopf} and thus establishes the result.
\end{proof}

\subsection*{Acknowledgement}
\label{sec:acknowledgements}

 I would like to thank Neil Trudinger for many valuable discussions.
\bibliographystyle{srtnumbered}
\bibliography{BibCaleRankinBAMSNoCrossing}

\begin{thebibliography}{10}
\newcommand{\enquote}[1]{`#1'}

\bibitem{al}
A.~Alexandroff.
\newblock \enquote{Existence and uniqueness of a convex surface with a given
  integral curvature}.
\newblock {\em C. R. (Doklady) Acad. Sci. URSS (N.S.)\/} {\bf 35} (1942),
  131--134.

\bibitem{EvansGariepyBook}
Lawrence~C. Evans and Ronald~F. Gariepy.
\newblock {\em Measure theory and fine properties of functions\/}.
\newblock Textbooks in Mathematics (CRC Press, Boca Raton, FL, 2015), revised
  edn.

\bibitem{FigalliBook}
Alessio Figalli.
\newblock {\em The Monge-Ampere equation and its applications\/}.
\newblock Zurich Lectures in Advanced Mathematics (European Mathematical
  Society (EMS), Zurich, 2017).

\bibitem{gt}
David Gilbarg and Neil~S. Trudinger.
\newblock {\em Elliptic partial differential equations of second order\/}.
\newblock Classics in Mathematics (Springer-Verlag, Berlin, 2001).
\newblock Reprint of the 1998 edition.

\bibitem{GK}
Nestor Guillen and Jun Kitagawa.
\newblock \enquote{Pointwise estimates and regularity in geometric optics and
  other generated {J}acobian equations}.
\newblock {\em Comm. Pure Appl. Math.\/} {\bf 70}~(6) (2017), 1146--1220.

\bibitem{Jhaveri}
Yash Jhaveri.
\newblock \enquote{Partial regularity of solutions to the second boundary value
  problem for generated {J}acobian equations}.
\newblock {\em Methods Appl. Anal.\/} {\bf 24}~(4) (2017), 445--475.

\bibitem{JT2}
Feida Jiang and Neil~S. Trudinger.
\newblock \enquote{On the second boundary value problem for {M}onge-{A}mp\`ere
  type equations and geometric optics}.
\newblock {\em Arch. Ration. Mech. Anal.\/} {\bf 229}~(2) (2018), 547--567.

\bibitem{LT}
Jiakun Liu and Neil~S. Trudinger.
\newblock \enquote{On the classical solvability of near field reflector
  problems}.
\newblock {\em Discrete Contin. Dyn. Syst.\/} {\bf 36}~(2) (2016), 895--916.

\bibitem{mccann}
Robert~J. McCann.
\newblock \enquote{Existence and uniqueness of monotone measure-preserving
  maps}.
\newblock {\em Duke Math. J.\/} {\bf 80}~(2) (1995), 309--323.

\bibitem{truloc}
Neil~S. Trudinger.
\newblock \enquote{On the local theory of prescribed {J}acobian equations}.
\newblock {\em Discrete Contin. Dyn. Syst.\/} {\bf 34}~(4) (2014), 1663--1681.

\end{thebibliography}

\end{document}